\documentclass[12pt]{amsart}

\numberwithin{equation}{section}

\usepackage{amsmath,amsthm,amsfonts,eucal,}
\usepackage{xypic}
\usepackage{graphicx}
\usepackage{amssymb}

\hfuzz12pt \vfuzz12pt


\def\cb{{\mathcal B}}

\def\cf{{\mathcal F}}

\def\ch{{\mathcal H}}

\def\cs{{\mathcal S}}


\def\ga{{\mathfrak A}} 

\def\gc{{\mathfrak C}}


\def\bc{{\mathbb C}}

\def\bl{{\mathbb L}}
\def\bj{{\mathbb J}}

\def\bm{{\mathbb M}}
\def\bn{{\mathbb N}}
\def\bo{{\mathbb O}}
\def\bp{{\mathbb P}}

\def\bz{{\mathbb Z}}

\def\a{\alpha}

\def\g{\gamma}  
\def\d{\delta}

\def\p{\pi}

\def\r{\rho}

\def\f{\varphi}  
  
\def\om{\omega} \def\Om{\Omega}

\newtheorem{thm}{Theorem}[section]

\newtheorem{lem}[thm]{Lemma}
\newtheorem{cor}[thm]{Corollary}
\newtheorem{prop}[thm]{Proposition}

\theoremstyle{definition}
\newtheorem{rem}[thm]{Remark}

\def\carf{\mathop{\rm CAR}}

\begin{document}
\title[On distributional symmetries on the CAR algebra]
{On distributional symmetries on the CAR algebra}

\author{Vitonofrio Crismale}
\address{Vitonofrio Crismale\\
Dipartimento di Matematica\\
Universit\`{a} degli studi di Bari\\
Via E. Orabona, 4, 70125 Bari, Italy}
\email{\texttt{vitonofrio.crismale@uniba.it}}

\author{Simone Del Vecchio}
\address{Simone Del Vecchio\\
Dipartimento di Matematica\\
Universit\`{a} degli studi di Bari\\
Via E. Orabona, 4, 70125 Bari, Italy}
\email{\texttt{simone.delvecchio@uniba.it}}

\author{Stefano Rossi}
\address{Stefano Rossi\\
Dipartimento di Matematica\\
Universit\`{a} degli studi di Bari\\
Via E. Orabona, 4, 70125 Bari, Italy}
\email{\texttt{stefano.rossi@uniba.it}}

\begin{abstract}

Spreadable, exchangeable, and rotatable states on the CAR algebra are shown to be
the same.

.

\vskip0.1cm\noindent \\
{\bf Mathematics Subject Classification}:  46L55, 46L53, 60G09,  81V74.\\
{\bf Key words}: CAR algebra, exchangeable states, spreadable states, rotatable states, non-commutative ergodic theory.
\end{abstract}

\maketitle

\section{Introduction}

A (discrete) stochastic process enjoys a distributional symmetry when its finite-dimensional joint distributions
are invariant under the action of remarkable groups or semigroups
of measurable transformations. Among these, a notable role is played
by the group of translations, the group of finite permutations, the group of rotations, and the semigroup
of increasing monotone maps. The corresponding distributional symmetries are known as
stationarity, exchangeability, rotatability and spreadability, respectively.
Exchangeability is settled by de Finetti's theorem that a
sequence of random variables is exchangeable if and only if its joint distribution is a mixture of
measures each of which is the joint distribution of an independent and identically distributed process, see \cite{dF,HS}. Rotatability is a stronger symmetry since permutations can be seen as orthogonal transformations.
As a consequence, for rotatable sequences  the common distribution of the
independent and identically distributed processes above cannot be arbitrary. In fact,
 it is forced to be Gaussian, which is the content of
Freedman's theorem, \cite{Fr,Ka}.
Lastly, spreadability should in principle be a weaker symmetry than exchangeability. However, it turns out to be the same as
exchangeability in light of a classical result of Ryll-Nardzewski, \cite{R}.\\
Distributional symmetries for quantum stochastic processes can be considered as well and have recently  been focused on in a number of
papers, see {\it e.g.} \cite{CrFid, CrFid2, CFG2, CDR, KKW, CDMR}. In this paper, we mainly deal with processes based on the CAR algebra $\carf(\bz)$.
These are sequences of
quantum random variables with sample algebra
$\bm_2(\bc)$ and which are subject to the canonical anticommutation relations. As explained in {\it e.g.} \cite{CrFid2, CRcar},
processes of this type are in a one-to-one correspondence with states on $\carf(\bz)$. Furthermore, the distributional
symmetry of such a process  amounts to the invariance of the corresponding state.\\
The set $\cs^{\bp_\bz}(\carf(\bz))$  of exchangeable states of the CAR algebra  has been addressed in \cite{CFCMP}, where it is shown to be
a Choquet simplex whose extremes are Araki-Moriya product states \cite{AM1} of a single even state of $\bm_2(\bc)$.

In this paper, we show that on the CAR algebra spreadable states are the same as exchangeable states, Theorem \ref{main},  thus establishing a fermionic version of the Ryll-Nardzewski theorem \cite{R}.
This fixes a mistake contained in (both statement and proof of) \cite[Theorem 4.2]{CRcar}  from previous work by two of the authors of the present paper.
Moreover, in Proposition \ref{face} we show that the set of spreadable/exchangeable states is somewhat small in the larger set of all stationary states being a proper face. 
Theorem  \ref{main} allows us to come to an accomplished characterization of symmetric states, which is the fermionic version of the
so-called extended de Finetti theorem. Precisely, combining our result with those in \cite{CFCMP, CrFid}, we have that
spreadability and exchangeablity for processes on the CAR algebra are both the same as conditional independence
and identical distribution with respect to the tail algebra. Interestingly, the techniques we employ to prove our result
are not limited to the CAR algebra. In fact, they can be used to handle tensor products of
$C^*$-algebras, thus including the formalism established by St\o rmer in \cite{Sto}. Among other 
things, applying the general result to commutative sample $C^*$-algebras  yields a proof of the classical Ryll-Nardzewski theorem for essentially bounded
random variables.

The strategy to arrive at our results has an interest in its own right not least because
non-commutative ergodic theory for automorphic actions of \emph{groups} on
$C^*$-algebras does not apply as spreadability is implemented by the action of the \emph{semigroup}
$\bj_\bz$ of all monotone increasing functions of $\bz$ to itself with cofinite range.
This obstacle, however, can be circumvented by using a geometric construction naturally
arising from the symmetry under consideration which allows one to
realize  the set of all spreadable states on the CAR algebra  as the set of all states on a larger CAR algebra that are invariant
under the action of a group, Theorem \ref{affine}.
In more detail, the larger algebra is $\carf(\bz\big[\frac{1}{2}\big])$, {\it i.e.} the CAR
algebra on the set $\bz\big[\frac{1}{2}\big]$ of dyadic numbers, obtained as inductive limit of
the increasing sequence $\{\carf\big(\frac{\bz}{2^n}\big): n\in\bn\}$ of CAR algebras.
Notably, this construction features the property that any spreadable state on $\carf\big(\frac{\bz}{2^n}\big)$
 uniquely extends to a spreadable state on $\carf\big(\frac{\bz}{2^{n+1}}\big)$, and the
inductive limit state one ends up with on $\carf(\bz\big[\frac{1}{2}\big])$ is invariant under the action
of a group $G$ of  continuous piecewise linear bijections of $\bz\big[\frac{1}{2}\big]$.
The group $G$ is in turn an inductive limit of a sequence of groups $G_n$.

$\carf(\bz)$ is also acted on by orthogonal transformations in a natural way through
Bogolubov automorphisms. This enables us to consider rotatability for states on the CAR
algebra as well.  Contrary to the classical case, rotatable states, $\cs^{\bo_\bz}(\carf(\bz))$, are exactly the same as exchangeable states,
Proposition \ref{rot}. The overall picture of invariant states on $\carf(\bz)$ is then the following
$$\cs^{\bo_\bz}(\carf(\bz))=\cs^{\bp_\bz}(\carf(\bz))=\cs^{\bj_\bz}(\carf(\bz))\subsetneq\cs^\bz(\carf(\bz)),$$
where $\cs^\bz(\carf(\bz))$ denotes the stationary states and the last strict inclusion is shown in{ \it e.g.} \cite{CRcar}.

Finally, the so-called self-adjoint subalgebra $\mathfrak{C}(\bz)$ of $\carf(\bz)$
is also addressed, since its generators are the Fermi analogues of Bernoulli
random variables. As proved in \cite{CRcar}, the vacuum state is the only exchangeable state on
 $\mathfrak{C}(\bz)$; therefore,  it is also its only rotatable state. Moreover, we prove in Corollary \ref{selfadjoint} that the vacuum state is the only spreadable state on $\mathfrak{C}(\bz)$.
The picture of distributional symmetries on
 $\mathfrak{C}(\bz)$ is the same as what we saw above for the CAR algebra. 
Phrased differently, the sets of spreadable, symmetric, and rotatable states on $\mathfrak{C}(\bz)$ 
all coincide with the singleton made up of the vacuum state, whereas there exist stationary states other than it, {\it cf.} \cite {CRcar}, Proposition 4.1.

\section{Preliminaries}
By a realization of a {\it quantum stochastic process} labelled by the index set $J$ we mean
 a quadruple
$\big(\ga,\ch,\{\iota_j\}_{j\in J},\xi\big)$, where $\ga$ is a (unital) $C^{*}$-algebra, referred to as the sample
algebra of the process, $\ch$ is a Hilbert space,
whose inner product is denoted by $\langle\cdot,\cdot \rangle$,
the maps $\iota_j$ are (unital) $*$-representation of $\ga$ on $\ch$, and
$\xi\in\ch$ is a unit vector, which is cyclic for  the von Neumann algebra
$\bigvee_{j\in J}\iota_j(\ga)''$.\\
The assignment of such a quadruple amounts to a state $\varphi$ on
the free product $C^*$-algebra $\ast_{J} \ga$. For free products of $C^*$-algebras the reader is referred to
\cite{A}. For completeness, we next quickly recall how the correspondence between stochastic
processes and states works. If
one starts
with a stochastic process, then a state
$\varphi$ on the free product $\ast_{J} \ga$ can be defined by setting
\begin{equation}\label{corresp}
\varphi(i_{j_1}(a_1)i_{j_2}(a_2)\cdots i_{j_n}(a_n)):=\langle(\iota_{j_1}(a_1)\iota_{j_2}(a_2)\cdots\iota_{j_n}(a_n)\xi, \xi\rangle
\end{equation}
for all $n\in\bn$, $j_1\neq j_2\neq\cdots\neq j_n\in J$ and
$a_1, a_2, \ldots, a_n\in\ga$, where $i_j:\ga\rightarrow \ast_{J}\ga$ is
the $j$-th embedding of $\ga$ into $\ast_{J} \ga$.
The values of $\varphi$ on monomials of the type above are
often referred to as finite-dimensional distributions of the process itself.\\
Conversely, all states on the free product $\ast_{J} \ga$ arise in this way,
see  \cite[Theorem 3.4]{CrFid}.
Phrased differently, starting now with a state $\f\in\cs\big(\ast_{J}\ga\big)$,  a stochastic process can be defined by using  the GNS representation $(\p_\f, \ch_\f, \xi_\f)$ of
$\f$. Indeed, for every $j\in J$ we can set $\iota_j(a):=\pi_\f(i_j(a))$, $a\in\ga$, so as to get the quadruple
$\big(\ga,\ch_\f,\{\iota_j\}_{j\in J},\xi_\f\big)$. \\

The {\it Canonical Anticommutation
Relations} (CAR for short) algebra over  $\bz$ is the universal unital
$C^{*}$-algebra $\carf(\bz)$,  with unit $I$, generated by
the set $\{a_j, a^{\dagger}_j: j\in \bz\}$ ({\it i.e.} the Fermi
annihilators and creators respectively),  satisfying the relations
\begin{equation}\label{stara}
(a_{j})^{*}=a^{\dagger}_{j}\,,\,\,\{a^{\dagger}_{j},a_{k}\}=\d_{j, k}I\,,\,\,
\{a_{j},a_{k}\}=\{a^{\dagger}_{j},a^{\dagger}_{k}\}=0\,,\,\,j,k\in \bz\,.
\end{equation}
where $\{\cdot, \cdot\}$ is the anticommutator and  $\d_{j, k}$ is the Kronecker
symbol.\\
The CAR algebra is the completion of the subalgebra of so-called
localized elements. More precisely,

\begin{equation*}
\carf(\bz)=\overline{\carf{}_0(\bz)}\,,
\end{equation*}
where
$$
\carf{}_0(\bz):=\bigcup\{\carf(F): F\subset \bz\,\text{finite}\,\}
$$
and
$\carf(F)$ is the $C^*$-subalgebra
generated by the finite set $\{a_j, a^\dag_j: j\in F\}$.
We will sometimes say that an element $a$ of $\carf(\bz)$ is localized
in $F$ if  $a\in\carf(F)$.\\
At this point, we also recall that CAR algebras can be associated functorially
to Hilbert spaces as well, the relation of the two constructions being that the CAR algebra indexed 
by a set $S$ and the CAR algebra indexed by a space $\ch$ are isomorphic when the cardinality of an orthonormal basis of $\ch$ is $|S|$. The advantage of working with the Hilbert space approach is that the  annihilators 
are defined in correspondence of any vector $f$ in $\ch$ in such a way that
$f\rightarrow a(f)$ is linear w.r.t. $f$, and, if $\{e_i: i\in S\}$ is an orthonormal basis, then $a(e_i)$ is just what we denoted by $a_i$ for all $i$ in $S$.\\
The parity automorphism $\theta$ acting on the generators as
$$
\theta(a_{j})=-a_{j}\,,\,\,\theta(a^{\dagger}_{j})=-a^{\dagger}_{j}\,,\quad
j\in \bz\,,
$$
makes the CAR algebra into a $\bz_2$-graded algebra.
Clearly, $\carf(\bz)$ decomposes as $\carf(\bz)=\carf(\bz)_{+} \oplus\carf(\bz )_{-}$, where
\begin{align*}
&\carf(\bz)_{+}:=\{a\in\carf(\bz) \ | \ \theta(a)=a\}\,,\\
&\carf(\bz)_{-}:=\{a\in\carf(\bz) \ | \ \theta(a)=-a\}\,.
\end{align*}
Elements  in $\carf(\bz)_+$ and in $\carf(\bz)_-$ are called
{\it even} and {\it odd}, respectively.\\
A state $\varphi$ on $\carf(\bz)$ is said to be {\it even}
if $\varphi\circ\theta=\varphi$, which is the same as $\f\lceil_{\carf(\bz)_{-}}=0$, where
$\f\lceil_{\carf(\bz)_{-}}$ is the restriction to $\carf(\bz)_{-}$ of $\varphi.$
\\
The CAR algebra can be presented in several equivalent ways. Below we recollect two
presentations that are particularly suited to the purposes of the paper.
First, the CAR algebra is  isomorphic
with the infinite graded tensor product of  the $\bz_2$-graded $C^*$-algebra
$(\bm_2(\bc), {\rm ad}(U))$ with itself, where ${\rm ad}(U)(\cdot):=U(\cdot) U$
is the grading induced by
the self-adjoint unitary (Pauli) matrix $U:=  \left(
\begin{array}{rr}
1 & 0 \\
0 &{ -1}
\end{array}
\right)$, see Example 3.2 of \cite{CRZbis}.\\
Second, the CAR algebra can also be seen as a quotient of the free product
$\ast_\bz \bm_2(\bc)$. Indeed, if we define $A:=  \left(
\begin{array}{rr}
0 & 1 \\
0 &0
\end{array}
\right)$, it is easy to see that  the quotient of $\ast_\bz \bm_2(\bc)$
modulo the relations $\{i_j(A^*), i_k(A)\}=\d_{j, k}I$
and $\{i_j(A),i_k(A)\}=\{i_j(A^*),i_k(A^*)\}=0$, for
all $j, k\in\bz$, is isomorphic with the CAR algebra by \eqref{stara}. It is this very presentation
of the CAR algebra that allows one to interpret its states as particular stochastic processes defined on the sample
algebra $\bm_2(\bc)$. \\
By a slight abuse of notation, for every $k\in\bz$, we continue to denote by
$i_k: \bm_2(\bc)\rightarrow \carf({\bz})$ the composition  $\pi\circ i_k$, where
$\pi:\ast_\bz \bm_2(\bc)\rightarrow \carf(\bz) $ is the quotient map.\\
For completeness' sake, we  rather quickly recall  that $\carf(\bz)$ has a distinguished (faithful) irreducible
representation on the Fermi Fock space $\cf_-(\ell^2(\bz))$, where for every
$j\in\bz$, the operator $a_j^\dag$ (or $a_j$) acts as
the Fermi creator  (or annihilator) of a particle in the state $e_j$,
$\{e_j:j\in\bz\}$ being the canonical orthonormal basis of
$\ell_2(\bz)$. For an exhaustive account of the Fermi Fock space the reader is
referred to Chapter 5.2 of \cite{BR2}.
The vector state associated with the Fock vacuum vector $\Om\in\cf_-(\ell_2(\bz))$ ({\it i.e.} the one corresponding to the state with no particles at all) is called the vacuum state and denoted by
$\om_\Om$.\\
Along with $\carf(\bz)$, we will also need to consider the
$C^*$-subalgebra $\gc(\bz)\subset \carf(\bz)$ generated by
the so-called position operators $x_i:=a_i+a_i^\dag$, $i\in\bz$.
In the literature, the $C^*$-algebra $\gc(\bz)$ is often referred to
as the self-adjoint subalgebra of the CAR algebra, and we will stick to this terminology
as well.\\

Comparing the distributional symmetries involved in the present paper requires considering some algebraic structures on $\bz$.\\
We denote by $\bp_\bz$ the group of finite permutations of the set $\bz$.
Its elements are  bijective maps of $\bz$ which only moves finitely many integers.  The group operation
is given by the map composition.\\
We denote by $\bl_\bz$ the unital semigroup of all strictly increasing maps of $\bz$ to itself.
As is known from Classical Probability, the importance of  the semigroup $\bl_\bz$ lies in the role it plays
in relation with spreadability.
However, this distributional symmetry is equivalently implemented by the action of a smaller semigroup,
$\bj_\bz$, which was introduced in \cite{CFG2}. This is by definition the set of all strictly increasing
maps $f$ of $\bz$ to itself whose range is cofinite, that is $|\bz\setminus f(\bz)|<\infty$, where for any
set $E$, $|E|$ denotes the cardinality of $E$.
We next describe a useful set of generators of $\bj_\bz$. For every fixed $h\in\bz$, define the $h$-{right hand-side partial shift} as the element $\theta_h$ of $\bj_\bz$ given by
$$
\theta_h(k):=\left\{\begin{array}{ll}
                      k & \text{if}\,\, k<h\,, \\
                      k+1 & \text{if}\,\, k\geq h\,.
                    \end{array}
                    \right.
$$
In \cite{CFG, CFG2}  the set $\{\theta_h, \tau, \tau^{-1}: h\in\bz\}$ has been shown to generate
$\bj_\bz$ as a unital semigroup, where $\tau$ is given by $\tau(k)=k+1$, $k\in\bz$.\\
The semigroup $\bj_\bz$ is known to be left but not right amenable, see \cite{CRcar}.
Despite failing to be right amenable, $\bj_\bz$ does have a right F{\o}lner sequence. Precisely,
the sequence of finite sets $\{F_n: n\in\bn\}\subset\bj_\bz$, with
$$
F_n:=\left\{\theta_{-n}^{h_{-n}}\theta_{-n+1}^{h_{-n+1}}\cdots\theta_0^{h_0}\cdots\theta_{n-1}^{h_{n-1}} \theta_n^{h_n}\tau^l: \,\, \sum_{i=-n}^n h_i\leq n^2,\,\, -n\leq l\leq n\right\}
$$
is proved to be a right F{\o}lner sequence in
\cite[Proposition 3.1]{CRcar}. Explicitly, this means that for any fixed
$h\in\bj_\bz$, one has
$$\lim_{n\rightarrow\infty}\frac{|F_n\Delta F_nh|}{|F_n|}=0\,,$$
where $\Delta$ denotes the symmetric difference.\\

By universality of the CAR algebra, the natural action on $\bz$
of both $\bp_\bz$ and $\bj_\bz$ can be lifted to the CAR algebra by making them act directly on the indices of the generators $a_i$. More precisely, if $h$ is in either $\bp_\bz$ or $\bj_\bz$, there exists a unique endomorphism $\alpha_h$ of
$\carf(\bz)$ uniquely determined by
$$\alpha_h(a_i)=a_{h(i)}\,, i\in\bz\, .$$
Accordingly, one can consider the invariant  states for these actions. In light of Equality \eqref{corresp}, the associated
processes will enjoy the relative distributional symmetry. In particular, invariant states for the action
of $\bp_\bz$ are known as \emph{symmetric} or \emph{exchangeable} states and the corresponding processes are often referred to as exchangeable processes. We will denote by $\cs^{\bp_\bz}(\carf(\bz))$  the compact convex set of all symmetric states
on the CAR algebra.\\		
Invariant states under the action of $\bj_\bz$ are known as \emph{spreadable} states and the corresponding processes are referred to as spreadable processes as well. We will denote by $\cs^{\bj_\bz}(\carf(\bz))$ the compact convex set of all spreadable states on the CAR algebra.\\
Stationary states, which we will denote by $\cs^\bz(\carf(\bz))$, are those invariant under the action of 
the automorphism $\alpha_\tau$, where $\tau$ is the right shift map, that is $\tau(k)=k+1$, $k\in\bz$.
Note that by definition there holds the inclusion $\cs^{\bj_\bz}(\carf(\bz))\subseteq \cs^\bz(\carf(\bz))$.\\

\noindent
Denote by $\bo_\bz$ the group of (infinite) orthogonal matrices $O:=[O_{i,j}]_{i,j\in\bz}$ such that there are only
finitely many entries $O_{i, j}$ not equal to $\d_{i,j}$, the Kronecker symbol.
We briefly recall how $\mathbb{O}_\bz$ acts on ${\rm CAR}(\bz)$ in a natural way.
Given a matrix $O=[O_{i,j}]\in\mathbb{O}_\bz$, set
$$
\rho_O(a_i):=\sum_{k\in\bz} O_{k,i}a_k \quad i\in\bz\,.
$$
In order to verify that each $\rho_O$ defines an automorphism of
${\rm CAR}(\bz)$, by universality we only need to check that $\{\rho_O(a_i): i\in\bz\}$
still satisfies the defining relations of the CAR algebra. One has
$\{\rho_O(a_i), \rho_O(a_j)\}=0$ and $\{\rho_O(a_i), \rho_O(a^\dag_j)\}=\delta_{i,j} I$
for any $i, j\in\bz$. The first equality is trivially verified. As for the second, we have:
\begin{align*}
\{\rho_O(a_i), \rho_O(a^\dag_j)\}&=\sum_{k, l\in\bz} O_{k,i} O_{l, j}\{a_k, a^\dag_l\}=
\sum_{k, l\in\bz} O_{k,i} O_{l, j}\delta_{k,l}I\\
&=\sum_{k\in\bz} O_{k,i} O_{k, j}I=\delta_{i,j}I
\end{align*}
where in the last equality we have exploited the orthogonality conditions. Finally, the equality
$\r_{O_1}\r_{O_2}=\r_{O_1O_2}$ can be immediately verified as well for any
$O_1, O_2\in\mathbb{O}_\bz$.\\
The automorphisms $\rho_O$ are known as Bogolubov automorphisms.
Actually, in the second-quantization formalism they are obtained  by functoriality in
a way that we next sketch for completeness' sake.
First, it is convenient to think of $\carf(\bz)$
as the concrete $C^*$-algebra $\carf(\ell_2(\bz))$.
Associated with any unitary $U$ acting on
$\ell_2(\bz)$ there is a unitary $\mathcal{F}(U)$ acting on the Fermi
Fock space $\mathcal{F}_-(\ell_2(\bz))$.
Now the inner automorphism ${\rm ad}(\mathcal{F}(U))$ restricts to an automorphism of
$\carf(\bz)$, see {\it e.g.} \cite{BR2}.
The automorphisms $\rho_O$ defined above are nothing but  the restriction of ${\rm ad}(\mathcal{F}(O))$
to the CAR algebra, where
by a minor abuse of notation $O$ denotes the unitary operator of $\ell_2(\bz)$ defined as
$$O e_i:= \sum_{k\in\bz} O_{k, i} e_k\,,\,\, i\in\bz\, ,$$
 $\{e_i: i\in\bz\}$ being the canonical orthonormal basis
of $\ell_2(\bz)$.\\
We will say that a state $\om$ on ${\rm CAR}(\bz)$ is
 \emph{rotatable} if it is  invariant
under the above action of $\mathbb{O}_\bz$, namely if
$\om\circ\rho_O=\om$ for every $O\in\mathbb{O}_\bz$.
The set of all rotatable states will be denoted by $\cs^{\bo_\bz}(\carf(\bz))$.
The terminology comes from Classical Probability in that the finite-dimensional
distributions of the process corresponding
to such a state will be invariant under orthogonal transformations
thanks to \eqref{corresp}.\\
At this point it is also worth noting that the states we are going to consider in this paper
are all automatically even because stationarity implies evenness, {\it cf.}
Example 5.2.21 in \cite{BR2}.\\

The actions of $\bp_\bz$ and $\bo_\bz$ on the CAR algebra are examples of a
$C^*$-dynamical system, by which we mean a triple
$(\ga, H, \beta)$, where $\ga$ is a $C^*$-algebra, $H$ is a group, and
$\beta: H\rightarrow {\rm Aut}(\ga)$ is a group homomorphism
from $H$ to ${\rm Aut}(\ga)$, the group of all $*$-automorphisms of
$\ga$. Systems of this type can be seen as the non-commutative
counterpart of topological dynamics and feature a much developed theory, for which
the reader is referred to {\it e.g.} \cite{S}.
In an effort to keep the present paper as self-consistent as we possibly can, however, we need to recall a few notions.
First, a state $\om$ on $\ga$ is said to be invariant if  $\om\circ\beta_h=\om$ for all $h\in H$.
On the GNS representation $(\ch_\om, \pi_\om, \xi_\om)$  of any such state $\om$ the action $\beta$ of $H$ can be implemented unitarily. More precisely, for each $h\in H$, $U_h^\om \pi_\om(a)\xi_\om:=\pi_\om(\beta_h(a))\xi_\om$, $a\in\ga$, defines a unitary on $\ch_\om$ such that $U_h^\om\pi_\om(a)U_{h^{-1}}^{\om}=\pi_\om(\beta_h(a))$ for all
$a\in\ga$. Lastly, a $C^*$-dynamical system $(\ga, H, \beta)$ is said to be $H$-abelian if, for any
$H$-invariant state $\om$, the family
$E_\om \pi_\om(\ga) E_\om$ is abelian, where $E_\om$ is the orthogonal projection
onto  $\ch_\om^H:=\{\xi\in\ch_\om: U_h^\om \xi=\xi\,, \textrm{for all}\, h\in H\}$.
$H$-abelianness is often inferred from asymptotical abelianness, see {\it e.g.} \cite[Proposition 3.1.16]{S}.
A dynamical system $(\ga, H, \beta)$ is called asymptotically abelian if there exists
a sequence $\{h_n: n\in\bn\}\subset H$ such that  for all $a, b\in\ga$ one has
$$\lim_{n\rightarrow\infty} \|  [\beta_{h_n}(a), b] \|=0\, ,$$
where $[\cdot, \cdot]$ denotes the commutator.\\
Finally, we recall that a state $\om$ invariant under the action of the shift $\tau$
is strongly clustering if $\lim_k\om(a\tau^{n_k}(b))=\om(a)\om(b)$ for all $a, b$ in $\ga$, for some diverging sequence
$\{n_k\}\subset\bn$, As is
known, if $\om$ is strongly clustering, then it is also extreme in the compact convex set
of all stationary states.

\begin{rem}
All of the present paper's results are stated for the CAR algebra indexed by $\bz$, but also hold true for
the CAR algebra indexed by $\bn$. Indeed, any stationary/spreadable/exchangeable state on
${\rm CAR}(\mathbb{N})$ uniquely extends to a stationary/spreadable/exchangeable (respectively) state on the bigger
$C^*$-algebra ${\rm CAR}(\mathbb{Z})$. 
We sketch below the easy proof.
Starting from a stationary/spredable/exchangeable state $\varphi $ on the smaller algebra, ${\rm CAR}(\mathbb{N})$,
one first defines a sequence of states $\{\varphi_n: n\in\mathbb{N}\}$, where, for each $n$, $\varphi_n$ is the state on
${\rm CAR}(\{k\in\mathbb{Z}: k\geq -n\})$ defined by $\varphi_n:=\varphi\circ\tau^n$ ($\tau$ being the shift on
${\rm CAR}(\mathbb{Z})$).
Now the sought stationary/spredable/exchangeable extension of $\varphi$ is the unique
state $\widetilde{\varphi}$ on ${\rm CAR}(\mathbb{Z})$ such that
the restriction of $\widetilde{\varphi}$ to ${\rm CAR}(\{k\in\mathbb{Z}: k\geq -n\})$ is $\varphi_n$ for every $n$.
Finally, uniqueness follows because if $\omega$ is another stationary/spredable/exchangeable extension of $\varphi$, then  the restriction of $\omega$ to ${\rm CAR}(\{k\in\mathbb{Z}: k\geq -n\})$ 
is $\varphi_n$ for every $n$.\\
\end{rem}

\section{Spreadable states}

Let $\om$ be a spreadable state on $\carf(\bz)$. Associated with any $h\in\bj_\bz$ there is an isometry acting on the Hilbert space
$\ch_\om$ of the GNS representation of $\om$ as
$$T_h^\om\pi_\om(a)\xi_\om:=\pi_\om(\a_h(a))\xi_\om\,, \,\, a\in\carf(\bz)\,.$$
We denote by $E_\om$ the orthogonal projection onto the closed subspace $\ch_\om^{\bj_\bz}:=\{\xi\in\ch_\om: T_h^\om\xi=\xi, \,\textrm{for all}\,\, h\in\bj_\bz \}$. Notice that $\bc\xi_\om\subset\ch_\om^{\bj_\bz}$, where
$\xi_\om$ is the GNS vector of $\om$.\\
Let $\bz\big[\frac{1}{2}\big]$ be the  (additive) group of dyadic numbers. For every fixed $n\in\bn$, we denote
by $\frac{\bz}{2^n}$ the set of rational numbers $\{\frac{k}{2^n}: k\in\bz\}$.\\
In order to perform the geometric construction alluded to in the introduction, we need to think of the generators of
$\bj_\bz$ as functions acting on the whole set of dyadic numbers. We do so by suitably extending $\theta_0$ to
a bijection $\widetilde{\theta}_0$ of $\bz\big[\frac{1}{2}\big]$, which is defined below

$$
\widetilde{\theta}_0(d):=\left\{\begin{array}{ll}
                      d & \text{if}\,\, d\leq -1\,, \\
                      2d+1&\text{if}\,\, -1\leq d\leq 0\\
                      d+1 & \text{if}\,\, d\geq 0\,.
                    \end{array}
                    \right.
$$
 For each natural $n$, we consider the dilation $\delta_n$ by $2^n$ acting on $\bz\big[\frac{1}{2}\big]$, that is
$$\delta_n(d)=2^n d\,, \,\, d\in \bz\bigg[\frac{1}{2}\bigg]\,. $$
Let us define  $\widetilde{\theta_n}:=\delta_n^{-1}\circ\widetilde{\theta}_0\circ\delta_n$ and
$\widetilde{\tau}_{k, n}(r)=r+\frac{k}{2^n}\,, \, r\in \bz\big[\frac{1}{2}\big]$, for $n\in\bn$ and $k\in\bz$.\\
For each natural $n$, we then consider the group $G_n$ generated by
$\widetilde{\theta_k}$ and $\widetilde{\tau}_{1, n}$ with $k=1, \ldots, n$.
Note that $G_n\subset G_{n+1}$ for each $n$. Therefore, we can consider the group $G$ which is by definition the inductive  limit
of the sequence $\{G_n: n\in\bn\}$ w.r.t. the inclusions, namely
$$G:= \bigcup_{n} G_n\, .$$
Notably, $G$  acts through automorphisms on $\carf\big(\bz\big[\frac{1}{2}\big]\big)$ by moving the corresponding indices, namely for every $g\in G$ a $*$-automorphism $\alpha_g$ of $\carf\big(\bz\big[\frac{1}{2}\big]\big)$ is uniquely defined by
$$\alpha_g(a_d):=a_{g(d)} \,\,\, d\in\bz\left[\frac{1}{2}\right]\,. $$
As usual, we denote by $\cs^G\big(\carf\big(\bz\big[\frac{1}{2}\big]\big)\big)$ the set of all invariant states on
$\carf\big(\bz\big[\frac{1}{2}\big]\big)$ under the action of $G$.\\
The next proposition allows us to realize any spreadable state on $\carf(\bz)$ as a unique
$G$-invariant state on $\carf\big(\bz\big[\frac{1}{2}\big]\big)$.
\begin{prop}
\label{affine}
The map $T:\cs^G\big(\carf\big(\bz\big[\frac{1}{2}\big]\big)\big)\rightarrow \cs^{\bj_\bz}(\carf(\bz))$
$$T(\om):=\om\lceil_{\carf({\bz})},\,\, \om\in\cs^G\left(\carf\left(\bz\left[\frac{1}{2}\right]\right)\right) $$
establishes an affine homeomorphism of compact convex sets.
\end{prop}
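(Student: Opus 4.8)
I would organize the proof around four properties of $T$: well-definedness, affineness, continuity, and bijectivity. Affineness and weak-$*$ continuity of a restriction map are immediate, and both $\cs^G(\carf(\bz[\frac{1}{2}]))$ and $\cs^{\bj_\bz}(\carf(\bz))$ are weak-$*$ compact convex sets, so once bijectivity is established the fact that $T$ is an \emph{affine homeomorphism} comes for free (a continuous bijection between compact Hausdorff spaces is a homeomorphism). For well-definedness I would note that $\widetilde\theta_0$ and $\tau$ map $\bz$ into $\bz$ with restrictions to $\bz$ equal to $\theta_0$ and $\tau$; since $\theta_h=\tau^h\theta_0\tau^{-h}$ and $\{\theta_h,\tau,\tau^{-1}:h\in\bz\}$ generates $\bj_\bz$, every generator of $\bj_\bz$ is the restriction to $\bz$ of an element of $G_1\subseteq G$ that preserves $\bz$. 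Hence $G$-invariance of $\om$ forces $\bj_\bz$-invariance of $\om\lceil_{\carf(\bz)}$, so $T(\om)$ is spreadable and $T$ lands in the right set.

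The whole weight of the statement rests on bijectivity, which I would route through the inductive limit structure $\carf(\bz[\frac{1}{2}])=\overline{\bigcup_n\carf(\frac{\bz}{2^n})}$ together with the following extension property, which I expect to be the main obstacle: \emph{every spreadable state on $\carf(\frac{\bz}{2^n})$ admits a unique spreadable extension to $\carf(\frac{\bz}{2^{n+1}})$.} I would prove this after transporting everything through the order isomorphism $\frac{\bz}{2^{n+1}}\cong\bz$, under which $\frac{\bz}{2^n}$ becomes the even sublattice $2\bz$; so the task reduces to extending a spreadable $\psi$ on $\carf(2\bz)$ uniquely to a spreadable state on $\carf(\bz)$. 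For uniqueness I would use that spreadability is equally implemented by the full semigroup $\bl_\bz$ of strictly increasing maps, not only by $\bj_\bz$: given a monomial localized in a finite $F\subset\bz$, the map $k\mapsto 2k$ lies in $\bl_\bz$ and carries $F$ into $2\bz$, so any spreadable extension must assign to that monomial the value prescribed by $\psi$. For existence I would define the extension on each finite-dimensional $\carf(F)$ as the pullback of $\psi$ along an order-preserving identification of $F$ with a finite subset of $2\bz$; spreadability of $\psi$ guarantees independence of the chosen identification and compatibility of the resulting family, which then glues to a state on $\carf_0(\bz)$, extends by continuity, and is itself spreadable.

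With this lemma in hand both directions follow formally. For surjectivity, from a spreadable $\om=\om_0$ on $\carf(\bz)=\carf(\frac{\bz}{2^0})$ I would build inductively the unique spreadable states $\om_n$ on $\carf(\frac{\bz}{2^n})$ with $\om_{n+1}\lceil_{\carf(\frac{\bz}{2^n})}=\om_n$ and let $\widehat\om$ be the inductive limit state. Its $G$-invariance I would check on generators: for $n\ge m$ the generators of $G_m$ preserve $\frac{\bz}{2^n}$ and, after relabeling $\frac{\bz}{2^n}\cong\bz$, restrict to the spreadability semigroup there, since $\delta_m^{-1}\widetilde\theta_0\delta_m$ acts as an increasing map with cofinite range while the $\tau_{k,m}$ act as translations; so spreadability of $\om_n$ yields the invariance, and letting $n$ vary exhausts $\bigcup_n\carf(\frac{\bz}{2^n})$, with $T(\widehat\om)=\om$ clear. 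For injectivity, if $\nu\in\cs^G$ then each $\nu\lceil_{\carf(\frac{\bz}{2^n})}$ is spreadable by the same generator computation and is the spreadable extension of $\nu\lceil_{\carf(\frac{\bz}{2^{n-1}})}$; by the uniqueness above it is determined, through downward induction, by $T(\nu)=\nu\lceil_{\carf(\bz)}$, and density of $\bigcup_n\carf(\frac{\bz}{2^n})$ forces $\nu$ to be determined by $T(\nu)$. The only genuinely delicate point is the extension lemma, where one must check mutual compatibility of the pullback states and must use invariance under all of $\bl_\bz$ rather than $\bj_\bz$ alone to secure uniqueness.
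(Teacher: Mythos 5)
Your proposal is correct and takes essentially the same route as the paper: both rest on the inductive limit $\carf\big(\bz\big[\frac{1}{2}\big]\big)=\overline{\bigcup_n\carf\big(\frac{\bz}{2^n}\big)}$ together with the unique spreadable extension one dyadic level up (which the paper builds globally via the isomorphism $\Psi_n(a_r)=a_{2r}$ rather than by your local pullbacks on finite subsets), followed by the same generator check for $G$-invariance and the same compactness argument for the homeomorphism. The only notable difference is that you make explicit the appeal to $\bl_\bz$-invariance in the uniqueness step, a point the paper leaves implicit by citing the equivalence of $\bj_\bz$- and $\bl_\bz$-invariance in its preliminaries.
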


\begin{proof}
We start by observing that the map is well defined, in that it sends a $G$-invariant state on
$\carf\big(\bz\big[\frac{1}{2}\big]\big)$ to a spreadable state on $\carf({\bz})$.
The statement amounts to proving that, conversely, any spreadable state on $\carf({\bz})$ can uniquely be lifted to a $G$-invariant state on $\carf\big(\bz\big[\frac{1}{2}\big]\big)$. To do so, for each $n\geq 1$, we consider the
$C^*$-algebra $\ga_n:=\carf\big(\frac{\bz}{2^n}\big)$. Since $\frac{\bz}{2^n}\subset\frac{\bz}{2^{n+1}}$, we also have $\ga_n\subset\ga_{n+1}$. Moreover, the map $\Psi_n: \ga_{n+1}\rightarrow\ga_n$ given by
$\Psi_n(a_r):=a_{2r}$ for every $r\in\frac{\bz}{2^{n+1}}$ is actually
a $*$-isomorphism by universality of the $\carf$ algebras.\\
Let now $\varphi_0$ be any spreadable state on $\carf(\bz)$.
We first extend $\varphi_0$ to $\ga_n$ inductively by setting
 $\varphi_{n+1}:=\varphi_n\circ\Psi_n$, for $n\geq 0$.
By definition, each $\varphi_n$ is a spreadable state on $\carf\big(\frac{\bz}{2^n}\big)$, by which we mean
that $\varphi_n$ is invariant under the natural action of all strictly monotone maps on $\frac{\bz}{2^n}$ with
cofinite range. Now $\bigcup_{n=1}^\infty\ga_n$ is dense in $\carf\big(\bz\big[\frac{1}{2}\big]\big)$, which means
a state $\varphi$ can be defined on the whole $\carf\big(\bz\big[\frac{1}{2}\big]\big)$ as inductive limit of
the sequence of states $\{\varphi_n: n\in\bn\}$.
It only remains to ascertain that the state thus obtained is $G$-invariant. To this end, it is
enough to note that, for each $n$, the state $\varphi_n$ is invariant under the action of $\widetilde{\theta_k}$ and $\widetilde{\tau}_{l, n}$,
 for $k=1, \ldots, n$ and $l$ in $\bz$.
At this point, $G$-invariance of $\varphi$  follows immediately by density of $\bigcup_n \ga_n$ in 
 $\carf\big(\bz\big[\frac{1}{2}\big]\big)$  and the fact that $G$ is generated by $\widetilde{\theta_k}$ and $\widetilde{\tau}_{l, n}$,
as $n$ varies in $\bn$ and $k, l$ vary in $\bz$.\\
The argument above also shows that the extension obtained is in fact
the only $G$-invariant extension, because $\varphi_n$ is the only spreadable extension
of $\varphi_0$ to $\ga_n$.\\
The continuity of the restriction map is entirely obvious. Finally, its inverse is continuous as well
by compactness.
\end{proof}
For any given $G$-invariant state $\om$ on $\carf(\bz\big[\frac{1}{2}\big])$ and for each $g\in G$, we can define
a unitary $T_g^\om$ acting on $\ch_\om$ as
$$T_g^\om\pi_\om(a)\xi_\om=\pi_\om(\alpha_g(a))\xi_\om\,,\, \, a\in\carf\bigg(\bz\left[\frac{1}{2}\right]\bigg)\, .$$
We then consider
the closed subspace
$$\ch_\om^G:=\{\xi\in\ch_\om: T_g^\om\xi=\xi,\, \textrm{for all}\, g\in G\}$$
of invariant
vectors, and denote by $E^G_\om$ the corresponding orthogonal projection.
Finally, we denote by $\om_0$ the restriction of $\om$ to $\carf(\bz)$.\\
We next need to consider the unital semigroup $S\subset G$ generated by
$\widetilde{\theta_n}$ and $\widetilde{\tau}_{k, n}$ as $n$ varies in $\bz$ and $k$ in $\bn$. Note that for any
element $h\in S$ one has $h(q)\geq q$ for all $q$ in $\bz\big[\frac{1}{2}\big]$.\\
Now, for any $G$-invariant state $\om$ on $\carf{(\bz\big[\frac{1}{2}\big])}$ define
$\ch_\om^S:=\{\xi\in\ch_\om: T_h^\om\xi=\xi,\, h\in S\}$.

\begin{lem}\label{fromGtoS}
For any $G$-invariant state $\om$ on $\carf{(\bz\big[\frac{1}{2}\big])}$, one has  then $\ch_\om^G=\ch_\om^S$.
\end{lem}

\begin{proof}
The inclusion $\ch_\om^G \subseteq \ch_\om^S$ is trivial since $S\subset G$.
As for the  inclusion $ \ch_\om^S\subseteq \ch_\om^G$, let $\xi$ be a vector sitting in $\ch_\om^S$.
In order to prove that $\xi$ lies in $\ch_\om^G$, it is enough to observe that $S$ generates $G$ as a group and
that a vector that is invariant for a unitary is also invariant for its adjoint.
\end{proof}
As an application of the above lemma, we find that the projections  $E_\om^G$ and $E_\om^S$  onto 
 $\ch_\om^G$ and $\ch_\om^S$, respectively, are actually the same. 
\begin{lem}\label{Gabelianness}
The dynamical system $\left(\carf\big(\bz\big[\frac{1}{2}\big]\big), G, \alpha\right)$ is  $G$-abelian.
\end{lem}

\begin{proof}
Set $\ga:=\carf\big(\bz\big[\frac{1}{2}\big]\big)$ for brevity.  We have to prove that for any
$G$-invariant state $\om$, the operators of the set $E_\om^G\pi_\om(\ga)E_\om^G$ commute with one another, where
$E_\om^G$ is the projection onto the subspace $\ch_\om^G=\{\xi\in\ch_\om: T^\om_g\xi=\xi, \, 	\textrm{for all}\, g\in G\}$ of all invariant vectors.\\
We shall accomplish this task in two steps.  The first step is to prove that
 $E_\om^G\pi_\om(a)E_\om^G=0$ for any odd element
$a\in\ga$. The second step is to show that
$E_\om^G\pi_\om(\ga_+)E_\om^G$ is abelian, where $\ga_+$ is the even subalgebra
of $\ga$. \\
In order to take  the first step, we closely follow the argument in Example 5.2.21 in \cite{BR2}.
Now it is straightforward to verify that for all odd $a$ in $\ga$, we have
$$\lim_{n\rightarrow\infty} \|\{\alpha_{g_n}(a), a^*\}\|=0\, ,$$
with $g_n:=\widetilde{\tau}_{n, 0}$, that is $g_n(r)=r+n$ for every $r\in\bz\big[\frac{1}{2}\big]$.
Now denote by $F_\om$ the orthogonal projection onto the closed subspace $\{\xi\in\ch_\om: T^\om_{g_n}\xi=\xi\,, \textrm{for all}\, n\in\bn\}$. Note that $E_\om^G\leq F_\om$. We aim to show that
$F_\om\pi_\om(a) F_\om=0$ if $a$ is odd. By applying  the von Neumann ergodic theorem to the projection
$F_\om$, one sees that for any such $a$ it holds
$$
\{F_\om\pi_\om(a )F_\om, F_\om\pi_\om(a^*)F_\om\}=\lim_{n\rightarrow\infty} \frac{1}{n+1}\sum_{k=0}^n
F_\om \pi_\om(\{\a_{g_k}(a), a^*\})F_\om=0\,.
$$
But then $F_\om\pi_\om(a )F_\om=0$, hence $$E_\om^G\pi_\om(a) E_\om^G=E_\om^GF_\om\pi_\om(a) F_\om E_\om^G=0\, .$$
Note that in particular any $G$-invariant state $\om$ is even.\\
As for the second step, we start by observing that $\ga_+$ is invariant under the action of $G$ as 
$\alpha_g\circ\theta=\theta\circ\alpha_g$ for all $g\in G$, where $\theta$ is the grading 
of $\carf\big(\bz\big[\frac{1}{2}\big]\big)$.  The dynamical system $(\ga_+, \alpha, G)$ obtained by restricting the dynamics to $\ga_+$ is asymptotically abelian. Indeed, if we consider again the sequence $g_n:=\widetilde{\tau}_{n, 0}$, that is $g_n(r)=r+n$ for every $r\in\bz\big[\frac{1}{2}\big]$, one can verify that for all localized $a, b\in\ga_+$ one has
$[\alpha_{g_n}(a), b]=0$ for $n$ big enough. But then from a standard density argument it follows that
$$\lim_{n\rightarrow\infty} \|[\alpha_{g_n}(a), b]\|=0\, ,$$
for all $a, b\in\ga_+$.
By applying Proposition 3.1.16 in \cite{S} we finally see that $(\ga_+, \alpha, G)$ is $G$-abelian.\\
We are now ready to reach the conclusion. To this end, define $\ch_{\om, \pm}:=\overline{\pi_\om(\ga_\pm)\xi_\om}$. Since $\om$ is even, it follows that $\ch_\om$ decomposes as $\ch_\om=\ch_{\om, +}\oplus\ch_{\om, -}$.
We then consider the orthogonal projections, $E^G_{\om, \pm}$, onto $\ch_{\om, \pm}\cap\ch_\om^G$, respectively. We now show that $E^G_{\om, -}$ is zero. This can be seen by noting that it projects onto the closure of $\{\pi_\om(b)\xi_\om: b\,\, \textrm{odd and G-invariant}\}$.  But thanks to the first step one sees that this subspace is zero.
Therefore, we have  
$$E_\om^G\pi_\om(\ga)E_\om^G=E_\om^G\pi_\om(\ga_+)E_\om^G=E^G_{\om, +}\pi_\om(\ga_+) E^G_{\om, +}\,,$$ and the latter set is commutative, as we saw in the second step.

\end{proof}
As an application of the previous result, we find that $E_\om^S$ is a rank-one projection if $\om$ is an extreme $G$-invariant state, as follows from Proposition 3.1.12 in \cite{S}.
We are now ready to state and prove our main result.

\begin{thm}\label{main}
On $\carf(\bz)$ any spreadable state is also exchangeable, that is
$$\cs^{\bj_\bz}(\carf(\bz))=\cs^{\bp_\bz}(\carf(\bz))\, .$$
\end{thm}

\begin{proof}
We need only prove the inclusion $\cs^{\bj_\bz}(\carf(\bz))\subseteq \cs^{\bp_\bz}(\carf(\bz))\, $, for the reverse inclusion
is trivial. By the Krein-Milman theorem, it is enough to handle extreme spreadable state.
Precisely, our strategy is to prove that  any extreme spreadable state $\om$ is an infinite product of  a single even state
$\rho$ on $\bm_2(\bc)$. \\
In the light of Proposition \ref{affine}, we are reconducted to showing this:
the restriction of any extreme
state $\om$ in $\cs^G\big(\carf\big(\bz\big[\frac{1}{2}\big]\big)\big)$  to $\carf(\bz)$ is an infinite product of  a single even state
$\rho$ on $\bm_2(\bc)$. \\
Now, if $\om$ is an extreme $G$-invariant state on $\carf\big(\bz\big[\frac{1}{2}\big]\big)$, then
$E_\om^G$ is the rank-one projection onto $\bc\xi_\om$ thanks Lemma \ref{Gabelianness} and 
Proposition 3.1.12 in \cite{S}. By virtue of Lemma \ref{fromGtoS}, $E_\om^S$ is the projection onto
$\bc_{\xi_\om}$ as well.\\
The next thing to do is to apply the Alaloglu-Birkhoff theorem, see \emph{e.g.} Proposition 4.3.4 in \cite{BR1}, to the family  of unitaries
$\{T_h^\om: h\in S\}\subseteq\cb(\ch_\om)$. This
yields a net $\{S_\gamma: \gamma\in I\}$, whose terms are finite convex combinations of the form $S_\gamma=\sum_{k=1}^{n_\gamma} \lambda_{k}^\gamma  T_{h_k^\gamma}^\om$, for some $\lambda_k^\gamma\geq 0$ with $\sum_{k=1}^{n_\gamma} \lambda_k^\gamma=1$, which is
strongly convergent to  $E_\om^S$. 
Let now $a, b$ be fixed elements of $\carf\big(\bz\big[\frac{1}{2}\big]\big)$. For each $\gamma$ in $I$, define $b_\gamma:=\sum_{k=1}^{n_\gamma} \lambda_{k}^\gamma  \a_{h_k^\gamma}(b)$. We have
\begin{align}
\begin{split}
\label{clustering}
\lim_\g \om(ab_\g)&= \lim_\g \langle \pi_\om(a b_\g) \xi_\om, \xi_\om \rangle\\
&= \lim_\g  \langle  \pi_\om(a)  \sum_{k=1}^{n_\g}\lambda_{k}^\g T_{h_k^\g}^\om \pi_\om(b)\xi_\om, \xi_\om\rangle
\\
&=\langle  \pi_\om(a)   E_{\bc\xi_\om}\pi_\om(b)\xi_\om, \xi_\om\rangle=
\om(a)\om(b)\,.
\end{split} .
\end{align}
We are now ready to prove our claim. Let $\rho$ be the even state on $\bm_2(\bc)$ defined by
$\rho(A):=\om(i_l(A))$, $A$ in $\bm_2(\bc)$, where $i_l$ is any of the embeddings
of $\bm_2(\bc)$ into its infinite $\bz_2$-graded tensor product. Note that $\rho$ is well
defined, {\it i.e.} the definition does not depend on $l$, because $\om$ is spreadable.
In order to show that $\om_0$ (that is the restriction of $\om$ to $\carf(\bz)\,$) coincides with the infinite product of $\rho$ with itself, we need to prove
the equality
$$\om_0(i_{j_1}(A_1)i_{j_2}(A_2)\cdots i_{j_n}(A_n))=\rho(A_1)\rho(A_2)\cdots\rho(A_n)$$
for every $n\in\bn$, for every $j_1< j_2< \ldots< j_n\in\bz$, and for every $A_1, A_2, \ldots, A_n$ in $\bm_2(\bc)$.
This task can be accomplished by induction on $n$. For $n=1$, there is nothing to prove.
Let us move on to take the inductive step. We will show that the equality holds with $n+1$ if
it holds with $n$. To this end, we start by observing that, for all $n\in\bn$, $j_1<j_2<\ldots<j_{n+1}$ in $\bz$, and $A_1, \cdots, A_n, B\in\bm_2(\bc)$, $G$-invariance of $\om$
gives
$$\om_0(i_{j_1}(A_1)\cdots i_{j_n}(A_n)i_{j_{n+1}}(B))=\om(i_{j_1}(A_1)\ldots i_{j_n}(A_n) i_{h_k^\g(j_{n+1})}(B))\,,$$
where, for each $\g$ in $I$, the $h_k^\g$'s are the monotone functions in $S$ (in particular, $h_k^\g(m)\geq m$ for all $m$ in $\bz$) appearing in the definition of the net $S_\g$ we introduced above. By summing the equalities above on all $k$'s between $1$ and $n_\g$, we find
\begin{align*}
&\om_0(i_{j_1}(A_1)\cdots i_{ j_n}(A_n)i_{j_{n+1}}(B))\\
=&\sum_{k=1}^{n_\g}\lambda_k^\g\om(i_{j_1}(A_1)\cdots i_{j_n}(A_n) i_{h_k^\g(j_{n+1})}(B))\\=&\om(i_{j_1}(A_1)\cdots i_{j_n}(A_n)b_\g)
\end{align*}

\medskip
where $b_\g:=\sum_{k=1}^{n_\g}\lambda_k^\g i_{h_k^\g(j_{n+1})}(B)=\sum_{k=1}^{n_\g} \lambda_{k}^\g \a_{h_k^\g}(i_{j_{n+1}}(B))$.\\

\medskip
Now thanks to Equalities \ref{clustering} (applied to $b=i_{j_{n+1}}(B)$) we have

$$\lim_\g\om(i_{j_1}(A_1)\cdots i_{j_n}(A_n)b_\g)=\om_0(i_{j_1}(A_1)\cdots i_{j_n}(A_n))\rho(B)\,,$$

and we are done because by our inductive hypothesis we have

$$\om_0(i_{j_1}(A_1)\cdots i_{j_n}(A_n))\r(B)=\rho(A_1)\cdots\rho(A_n)\rho(B)\, .$$
\end{proof}

Furthermore, exchangeable (or spreadable) states make up a face of the larger convex
of all stationary states, as we next show.

\begin{prop}\label{face}
The convex subset $\cs^{\bp_\bz}(\carf(\bz))=\cs^{\bj_\bz}(\carf(\bz))$ of exchangeable/spreadable states of $\carf(\bz)$ is a proper face of  the convex set of all stationary states $\cs^{\bz}(\carf(\bz))$.
\end{prop}

\begin{proof}
We start by showing that the extreme states of $\cs^{\bp_\bz}(\carf(\bz))$ are extreme
in $\cs^{\bz}(\carf(\bz))$ as well. This  can be seen by showing that such states are strongly clustering
(w.r.t the action of $\tau$).
Now any such state $\om$, is by \cite[Theorem 5.3]{CFCMP}  the infinite product of a single even
state $\r$ on $\bm_2(\bc)$.
If $a, b$ in $\carf(\bz))$ are localized, one has $\om(a\tau^n(b))=\om(a)\om(b)$ eventually for $n$ big enough. The full conclusion follows by a standard density argument since localized elements are dense. \\
We are now ready to reach the conclusion. Let $\varphi$ be a symmetric state which is a convex combination of the type
$\varphi=t\varphi_1+(1-t)\varphi_2$, $t\in(0, 1)$, where $\varphi_1$ and $\varphi_2$ are stationary states.
We need to make sure that both $\varphi_1$ and $\varphi_2$ are actually symmetric. Denote by $\mathcal{K}\subset\cs^{\bp_\bz}(\carf(\bz))$ the compact set of extreme symmetric states.
Since $\cs^{\bp_\bz}(\carf(\bz))$ is a Choquet simplex, there exists a unique measure
$\mu$, which is supported on $\mathcal{K}$, such that $\varphi=\int_K \varphi_\mu{\rm d}\nu(\mu)$.
Since  $\cs^\bz(\carf(\bz))$ is a Choquet simplex as well, see Example 5.2.21 in \cite{BR2},
the decomposition above is also the (unique) decomposition
 of $\varphi$ as a point of $\cs^\bz(\carf(\bz))$. Denote now by $\mathcal{E}$ the set of all extreme stationary states.
Again, since $\cs^\bz(\carf(\bz))$ is a Choquet simplex, there exist unique measures $\nu_1, \nu_2$ supported\footnote{By this we mean that $\nu_i(\mathcal{E})=1$, $i=1, 2$, and this equality makes perfect sense as
$E$ is a $G_\delta$-set thanks to the general lemma of Choquet, see {\it e.g.} \cite[3.4.1]{S}.} on $\mathcal{E}$ such that
$$\varphi_i=\int_{\mathcal{E}} \om{\rm d}\nu_i(\om)\,, i=1, 2.$$
By uniqueness of the decomposition, the equality $\nu=t\nu_1+(1-t)\nu_2$ must hold. In particular, the support of
$ t\nu_1+(1-t)\nu_2$ is the same as the support of $\nu$, which is contained in $\mathcal{K}$. But then
the supports of both $\nu_1$ and $\nu_2$ are contained in $\mathcal{K}$ as well, which means
$\varphi_1$ and $\varphi_2$ are symmetric.
\end{proof}

\begin{rem}
Despite $\cs^{\bp_\bz}(\carf(\bz))$ being a face of $\cs^\bz(\carf(\bz)$, the so-called Olshen
theorem \cite{O} fails on the CAR algebra. More explicitly, there exist exchangeable states on $\carf(\bz)$
whose exchangeable algebra is strictly contained in the stationary algebra.
The vacuum state itself provides the most elementary counterexample. 
Indeed, its exchangeable algebra $\pi_{\om_\Om}''(\carf(\bz))_{\bp_\bz}$
is trivial as follows from \cite[Proposition 2.9, Proposition 2.10]{CRZbis}.
However, its stationary algebra $\pi_{\om_\Om}''(\carf(\bz))_{\bz}=\{U_{\om_\Om}^\tau\}'$ is not trivial.

\end{rem}

Theorem \ref{main} allows us to determine the structure of the set of all spreadable states on the so-called
self-adjoint subalgebra $\mathfrak{C}(\bz)$ of $\carf(\bz)$. This is by definition the $C^*$-subalgebra generated by
the set $\{x_i: i\in\bz\}$, with $x_i:= a_i+ a_i^\dagger$, $i\in\bz$.
It turns out that this set only consists of the vacuum state.

\begin{cor}\label{selfadjoint}
The vacuum state is the only spreadable state on $\mathfrak{C}(\bz)$.
\end{cor}

\begin{proof}
Let $\varphi$ be a spreadable state on $\mathfrak{C}(\bz)$. Consider any extension $\widetilde{\varphi}$ of $\varphi$ to
$\carf(\bz)$. For each $n\in\bn$, define $\widetilde{\varphi}_n:=\frac{1}{|F_n|}\sum_{h_\in F_n}\widetilde{\varphi}\circ \a_h$,
where $\{F_n: n\in\bn\}$ is the  F{\o}lner  sequence of $\bj_\bz$. By weak$^*$ compacteness, up to extracting a subsequence, we can suppose that the sequence $\widetilde{\varphi}_n$
weakly$^*$ converges to a state $\om$. By construction, $\om$ is a spreadable state on $\carf(\bz)$ and
its restriction to $\mathfrak{C}(\bz)$ is the same as $\varphi$. Now by Theorem \ref{main} $\om$ is exchangeable, and thus $\varphi$ is exchangeable as well. The thesis is finally reached as an application of
Proposition 4.3 in \cite{CRcar}, where the vacuum state is shown to be the unique exchangeable state on
$\mathfrak{C}(\bz)$.
\end{proof}

Before ending the section, we would like to point out that our techniques can also be made use of  to address 
infinite tensor products of a (nuclear) sample $C^*$-algebra $\ga$. 
Denote by $\bigotimes_\bz \ga$ the infinite tensor product of $\ga$ with itself.
Both $\bp_\bz$ and $\bj_\bz$ act  naturally on $\bigotimes_\bz \ga$.
Invariant states under the action of permutations, often referred to as symmetric states, make up a Choquet simplex whose extreme points are infinite product of a single state on $\ga$, see Theorem 2.7 in \cite{Sto}.\\
As with the CAR algebra,
the action of $\bj_\bz$ on  $\bigotimes_\bz \ga$  can be extended to an action of $G$ on $\bigotimes_{\bz\left[\frac{1}{2}\right]} \ga \supseteq\bigotimes_\bz \ga $. Again, the $C^*$-dynamical system thus obtained is
$G$-abelian. As a consequence, the projection $E_\om$ associated with any extreme spreadable state $\om$ is
one-dimensional. This allows one to  exploit the arguments employed in the proof of Theorem \ref{main}. Therefore, one has that the Choquet simplex of spreadable states on $\bigotimes_\bz \ga$  is the same as the simplex of symmetric states.
This applies in particular to commutative sample $C^*$-algebras. In this way one also finds an 
independent proof of the classical Ryll-Nardzweski theorem (for bounded random variables).

\section{Rotatable states}

In this section we show that  the set of rotatable states on the CAR algebra agrees with the set of exchangeable states.
This result can in a sense be regarded as a version of Freedman's theorem for the CAR algebra, in that it can be combined
with Theorem 5.3 and Theorem 5.5 in \cite{CFCMP} to provide
an explicit description of what rotatable states look like.

\begin{thm}\label{rot}
Let $\om$ be a state on $\carf(\bz)$. The following are equivalent:
\begin{itemize}
\item [(i)] $\om$ is exchangeable;
\item [(ii)] $\om$ is spreadable;
\item [(iii)] $\om$ is rotatable.
\end{itemize}

\end{thm}

\begin{proof}
The equivalence between (i) and (ii) has been established in Theorem \ref{main}.
As for the equivalence between (i) and (iii), we need only prove $\mathcal{S}^{\bp_{\bz}}({\rm CAR}(\bz))\subseteq\mathcal{S}^{\mathbb{O}_{\bz}}({\rm CAR}(\bz))$.
To this end,
it is enough to ascertain that the extreme states in $\mathcal{S}^{\bp_{\bz}}({\rm CAR}(\bz))$ are rotatable thanks to
the Krein-Milman theorem as  $\mathcal{S}^{\bp_{\bz}}({\rm CAR}(\bz))$ and $\mathcal{S}^{\mathbb{O}_{\bz}}({\rm CAR}(\bz))$
are convex and weakly$^*$ compact.\\
As proved in \cite[Theorem 5.3 ]{CFCMP}, the extreme states of $\mathcal{S}^{\bp_{\bz}}({\rm CAR}(\bz))$ are precisely the Araki-Moriya product states $\varphi_\mu$, $\mu\in [0,1]$, with
$\varphi_\mu=\times_{\bz}\, \rho_\mu$ and $\rho_\mu$ is the state on $\bm_2(\bc)$ given by
\begin{equation}\label{rhostate}
\r_\mu \left(
\begin{array}{ll}
\alpha & \beta \\
\gamma & \delta
\end{array}
\right) =\mu \alpha+\left( 1-\mu \right) \delta\,.
\end{equation}
Note that in particular $\varphi_\mu(a_i^\dagger a_i)=1-\mu$ for all $i\in\bz$.\\
Now these product states are known to be gauge-invariant quasi-free states with covariance operator $(1-\mu)I$, namely
\begin{equation}\label{quasifree}
\varphi_\mu(a^*(f_n)	\ldots a^*(f_1)a(g_1)\ldots a(g_m))= \delta_{n, m} {\rm det} [\langle (1-\mu)f_i, g_j \rangle]
\end{equation}
for all $n, m\in\bn$, and $f_1, \ldots, f_n, g_1, \ldots g_m\in \ell_2(\bz)$, where $\langle\cdot, \cdot\rangle$ denotes the standard inner product  of $\ell_2(\bz)$, which is taken anti-linear to the left and linear to the right.\\
In order to prove \eqref{quasifree}, we first verify it for basis vectors, in which case we find
$\varphi_u(a^*_{i_1}\ldots a^*_{i_n}a_{j_1}\ldots a_{j_m})=\delta_{n, m}{\rm det}[(1-\mu)\delta_{i_l, j_k}]$.
The determinant in the right-hand side of the last equality is $0$ if $\{i_1, \ldots, i_n\}\neq \{j_1, \ldots, j_m\}$
(that is when there is at least one creator $a^*_{i_l}$ which cannot be matched withe the corresponding annihilator
$a_{i_l}$). When creators and annihilators match, there are two cases to analyze. In the first, the indices appearing in the monomial
$a^*_{i_1}\ldots a^*_{i_n}a_{j_1}\ldots a_{j_m}$  are ordered ($i_1<i_2\ldots <i_n$ and $j_1>j_2>\ldots> j_n$), in which case both sides equal $(1-\mu)^n$. In the second, the indices are not ordered. Re-ordering the indices entails
exchanging two rows or columns of the matrix on the right of the equality, and exchanging two next operators on the left of the equality, and both operations make a negative sign appear each time.\\
By linearity of $a(f)$ w.r.t. to its argument $f$ and by linearity of the determinant w.r.t. taking linear combinations of a single
row/column at a time, Equality \eqref{quasifree} holds when  $f_i$'s and $g_j$'s are finite linear combinations
of basis vectors. Finally, the full conclusion is reached by density and continuity.\\
Now invariance of $\varphi_\mu$ under all Bogolubov automorphisms $\rho_O$ is a straightforward
consequence of \eqref{quasifree}.

\end{proof}

%

\section*{Acknowledgments}
\noindent
All authors acknowledge  the support of the Italian INDAM-GNAMPA Project Code CUP\_E55F22333270001, 
the Italian PNRR MUR project PE0000023-NQSTI, and Centro Nazionale CN00000013 CUP H93C22000450007.\\

\noindent
\textbf{Data Availibility} Data sharing not applicable to this article as no datasets were generated or analyzed during
the current study. The authors are not aware of a conflict of interest on their side related to this article.

\end{document}